\newtheorem{thm}{Theorem}[section]
\newtheorem{lem}[thm]{Lemma}
\newtheorem{prop}[thm]{Proposition}
\newtheorem{question}[thm]{Question}
\theoremstyle{definition}
\newtheorem{rmk}[thm]{Remark}
\newcommand{\cd}{{\rm cd}}
\newcommand{\im}{{\rm im}}
\newcommand{\Char}{{\rm char}}
\newcommand{\Gal}{{\rm Gal}}
\newcommand{\sE}{{\mathcal E}}
\newcommand{\F}{{\mathbb F}}
\newcommand{\G}{{\mathbb G}}
\newcommand{\Q}{{\mathbb Q}}
\begin{document}
\title[]{Embedding problems with local conditions and the admissibility of finite groups}
\author{ Nguy\^e\~n Duy T\^an }
 \address{ Universit\"at Duisburg-Essen, FB6, Mathematik, 45117 Essen, Germany \\
 and Institute of Mathematics, 18 Hoang Quoc Viet, 10307, Hanoi-Vietnam.}
\email{duy-tan.nguyen@uni-due.de}
\thanks{
Partially supported by NAFOSTED, the SFB/TR45 and the ERC/Advanced Grant 226257.}

\date{} 

\begin{abstract} 
Let $k$ be a field of characteristic $p>0$, which has infinitely many discrete valuations. We show that every finite embedding problem for $\Gal(k)$  with finitely many prescribed local conditions, whose kernel is a $p$-group, is properly solvable. We then apply this result in studying the admissibility of finite groups over global fields of positive characteristic. We also give another proof for a result of Sonn.

AMS Mathematics Subject Classification (2010): 12E30, 12F12.
\end{abstract}

\maketitle

\section{Introduction}
By a celebrated theorem of Shafarevich every finite solvable group can be realized as the Galois group of a Galois extension of any given finite algebraic number field $k$. Shafarevich's proof of this theorem however is long and difficult. Seaking for a shorter and more conceptual proof of the theorem leads Neukirch \cite{Ne1, Ne2} naturally to study the embedding problems with local conditions. The works of Neukirch plays also an important role in studying the admissibility of finite groups, a notion due to Schacher \cite{Sch}, see e.g., \cite{N,So2}. See also \cite{HHK,NP} for recent works using patching method to study the admissibility of finite groups.

Let $k$ be a global fields (i.e., a finite extension of $\Q$ or of $\F_p(t)$). For every finite embedding problem $\sE$ for $\Gal(k)$, we can associate a local embedding problem $\sE_v$ for $\Gal(k_v)$, for each prime $v$ of $k$. Each global solution for $\sE$ then gives rise naturally local solutions for $\sE_v$. Now given a collection of local solutions, one can ask whether there is a global one which induces these given local solutions. In the case of number fields or more general in the case that the order of the kernel of our finite embedding problems are prime to the characteristic of $k$, there are many results which give an affirmative answer to that question, see for example \cite{Ne1, Ne2, Ste}.

However, to the best of our knowledge, there are not as much results in the case that the kernel of the embedding problems are divisible by the characteristic of $k$. In this direction, there is a result of Sonn which says that for any embedding problem $1\to A\to E\to \Gal(K/k)\to 1$ for a global field $k$ of characteristic $p>0$ where $E$ is a $p$-group, every given finite set of local solutions is induced from a global one (see \cite[Theorem 1]{So1}).

In this paper, we consider a field $k$ of characteristic $p>0$ equipped with an infinitely many number of discrete valuations and we consider the finite embedding problems for $k$ whose kernels are $p$-groups. We show that for such embedding problems with finitely many prescribed local conditions, it is always properly solvable. See Theorem \ref{thm:solvable}. This result contains the above mentioned result of Sonn as a special case.

We then use Theorem \ref{thm:solvable} to study the admissibility of finite groups over global function fields.
Let $k$ be a field. Following Schacher \cite{Sch}, a finite group $G$ is called {\it $k$-admissible} if there exists a finite Galois extension $L/k$ with Galois group isomorphic to $G$ such that $L$ is a maximal commutative subfield of some finite-dimensional central division algebra over $k$.
 We obtain a reduction theorem for the admissibility of finite groups.  

\begin{thm}
\label{thm:adm}
Let $k$ be a global field of characteristic $p>0$. Let $\Gamma$ be a finite group, $P$ a normal $p$-subgroup of $\Gamma$. If the quotient group $\Gamma/P$ is $k$-admissible then $\Gamma$ is $k$-admissible.
\end{thm}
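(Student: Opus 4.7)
The plan is to invoke Schacher's local criterion for admissibility \cite{Sch} and to construct the required Galois realization of $\Gamma$ by solving an embedding problem with two carefully prescribed local conditions via Theorem~\ref{thm:solvable}. Recall that a finite Galois extension $L/k$ of a global field $k$ is adequate (that is, a maximal subfield of some central division algebra over $k$) if and only if for every prime $q$ dividing $[L:k]$ there exist two primes of $k$ whose decomposition groups in $\Gal(L/k)$ each contain a Sylow $q$-subgroup.

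Fix a Galois extension $L_0/k$ realizing $\Gamma/P$ and witnessing its $k$-admissibility, and denote by $\pi\colon\Gamma\to\Gamma/P$ the quotient. The primes dividing $|\Gamma|$ are $p$ together with those $q\neq p$ dividing $|\Gamma/P|$. For such $q\neq p$, any Sylow $q$-subgroup of $\Gamma$ projects isomorphically onto a Sylow $q$-subgroup of $\Gamma/P$. Moreover, in any Galois extension $L/k$ with $L\supseteq L_0$ and $\Gal(L/k)\cong\Gamma$, the decomposition group $D_v(L/k)$ surjects onto $D_v(L_0/k)$ with kernel contained in the $p$-group $P$, so $|D_v(L/k)|_q=|D_v(L_0/k)|_q$ for every $q\neq p$. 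Consequently the two primes of $k$ provided by the admissibility of $L_0/k$ at $q$ automatically verify Schacher's criterion at $q$ for any such $L/k$. Only the prime $p$ remains.

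Let $T$ be a Sylow $p$-subgroup of $\Gamma/P$; then $S_p:=\pi^{-1}(T)=PT$ is a Sylow $p$-subgroup of $\Gamma$. Choose two primes $w_1,w_2$ of $k$, not already used above, whose decomposition groups $D_i:=D_{w_i}(L_0/k)$ contain $T$: these exist either by Schacher's criterion for $L_0/k$ at the prime $p$ (when $p\mid|\Gamma/P|$), or trivially by taking completely split primes (when $p\nmid|\Gamma/P|$, in which case $T=1$). Set $E_i:=\pi^{-1}(D_i)\subseteq\Gamma$, so that $S_p\subseteq E_i$. At each $w_i$ I construct a proper local solution
\[
\phi_i\colon\Gal(k_{w_i})\twoheadrightarrow E_i
\]
of the local embedding problem $1\to P\to E_i\to D_i\to 1$. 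A (not necessarily surjective) lift exists because $\cd_p(\Gal(k_{w_i}))\le 1$, as $k_{w_i}$ is a local field of characteristic $p$; surjectivity is then achieved by modifying the lift by a $1$-cocycle with values in $P$, using the abundance of $p$-extensions of equal-characteristic local fields (in particular, every finite $p$-group is a Galois group over $k_{w_i}$). A short group-theoretic check shows that any lift whose image surjects onto $D_i$ and contains $P$ must equal $E_i$; hence $\im(\phi_i)=E_i\supseteq S_p$.

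Finally, apply Theorem~\ref{thm:solvable} to the global embedding problem $1\to P\to\Gamma\to\Gamma/P\to 1$ associated with $L_0/k$, prescribing the local conditions $\phi_1,\phi_2$ at $w_1,w_2$. The resulting proper global solution determines a Galois extension $L/k$ with $\Gal(L/k)\cong\Gamma$, $L\supseteq L_0$, and $D_{w_i}(L/k)=\im(\phi_i)\supseteq S_p$ for $i=1,2$. Combined with the transfer at primes $q\neq p$ established above, this verifies Schacher's criterion for $L/k$, so $\Gamma$ is $k$-admissible. I expect the main obstacle to be the proper local solvability used in constructing $\phi_i$: producing a surjective lift (rather than merely some lift, which is automatic from the vanishing of $H^2$) is the technical crux, and is precisely where the hypothesis $\Char\,k=p$ is exploited locally.
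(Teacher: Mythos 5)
Your proposal is correct and follows essentially the same route as the paper: realize $\Gamma/P$ adequately via Schacher's criterion (Theorem \ref{thm:Sch}), then properly solve the embedding problem $1\to P\to\Gamma\to\Gamma/P\to 1$ with finitely many prescribed local conditions using Theorem \ref{thm:solvable}. The only real difference is an economy: you prescribe local data at just two primes and observe that the Sylow conditions at primes $q\neq p$ transfer automatically because the kernel is a $p$-group, whereas the paper prescribes (proper) local solutions at two primes for \emph{each} prime divisor of $|\Gamma|$; both work. The one thin spot is exactly the one you flag, namely proper solvability of the local problems $1\to P\to E_i\to D_i\to 1$ over $k_{w_i}$: twisting a weak solution by a $1$-cocycle directly handles abelian $P$, but for general $P$ one needs a Frattini-type induction through a filtration of $P$; the paper simply cites \cite[Theorem 1.1]{BT} for this fact, which is the clean way to close that step.
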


We note that Stern, in \cite[Theorem 1.3]{Ste}, proves the above result under a stronger assumption that $P$ is a normal $p$-\emph{Sylow} subgroup of $\Gamma$.

In the last section, as an application of Theorem \ref{thm:solvable}, we present another (shorter) proof of a result of Sonn, see Theorem \ref{thm:Sonn}.
\\
\\
\noindent {\bf Acknowledgements:} We would like to give our sincere thanks to H\'el\`ene Esnault for her support and constant encouragement. We would like to thank Danny Neftin for his many interesting comments and remarks, which improve substantially the paper. Thanks are also due to Lior Bary-Soroker for his helps.


\section{$p$-embedding problems with local conditions}
An {\it embedding problem} $\sE$ for a profinite group $\Pi$ is a diagram 
\[
\sE:=
\xymatrix
{
{} & \Pi \ar[d]^{\alpha}\\
\Gamma \ar[r]^f & G
}
\] 
which consists of a pair of profinite groups $\Gamma$ and $G$ and epimorphisms $\alpha:\Pi\to G$, $f:\Gamma\to G$. (All homomorphsims of profinite groups considered in this paper are assumed to be continuous)

A {\it weak solution} of $\sE$ is  a homomorphism $\beta:\Pi\to \Gamma$ such that $f\beta=\alpha$. If such a $\beta$ is surjective, then it is called a {\it proper solution}. We will call $\sE$  {\it weakly} (respectively {\it properly}) {\it solvable} if it has a weak (respectively proper) solution. We call $\sE$ a {\it finite} embedding problem if the group $\Gamma$ is finite. The {\it kernel} of $\sE$ is defined to be $N:=\ker(f)$. We call $\sE$ a {\it p-embedding problem} if $N$ is a  $p$-group. 

Let $\phi_1:\Pi_1\to \Pi$ be a homomorphism of profinite groups. Then the embedding problem $\sE$ induces an embedding problem, which we call the {\it pullback} of $\sE$ to $\Pi_1$ via $\phi_1$,
\[
\phi^*(\sE):=
\xymatrix
{
{} & \Pi_1 \ar[d]^{\alpha\phi_1}\\
\Gamma_1 \ar[r]^{f_1} & G_1,
}
\] 
where $G_1=\alpha\phi_1(\Pi_1)\subset G$, $\Gamma_1=f^{-1}(G_1)\subset \Gamma$, and $f_1=f\mid_{\Gamma_1}$. Note that $\sE$ and $\phi_1^*(\sE)$ have the same kernel.

If $\beta:\Pi\to \Gamma$ is a weak solution to $\sE$, then there is an induced weak solution to $\phi_1^*(\sE)$, namely the {\it pullback} $\phi_1^*(\beta):=\beta\phi_1:\Pi_1\to \Gamma_1$.

Suppose that $\phi=\{\phi_j\}_{j\in J}$ is a family of homomorphisms $\phi_j:\Pi_j\to \Pi$ of profinite groups. 
We will say that $\sE$ is {\it weakly} (respectively {\it properly}) $\phi$-{\it solvable} if for every collection $\{\beta_j\}_{j\in J}$ of weak solutions to the pullback embeddings problems $\phi_j^*(\sE)$, there is a weak (respectively proper) solutions $\beta$ to $\sE$ and elements $n_j\in N=\ker(\sE)$ such that $\phi_j^*(\beta)={\rm inn}(n_j)\circ \beta_j$ for all $j\in J$. (Here ${\rm inn}(n_j)\in {\rm Aut}(\Gamma)$ denotes left conjugation by $n_j$).

Let $\phi=\{\phi_j \}_{j\in J}$ be a family of homomorphisms $\phi_j:\Pi_j \to \Pi$ of profinite groups. We call $\phi$ is {\it strongly $p$-dominating} if the induced map 
$$\phi^{*}: H^1(\Pi,P)\to \prod_{j\in J} H^1(\Pi_j,P)$$
is surjective with infinite kernel for every non-trivial finite elementary abelian $p$-group $P$ on which $\Pi$ acts continuously. 


We will use the following theorem due to Harbater (see \cite[Theorem 2.3]{Ha1} and \cite[Theorem 1]{Ha2}).
\begin{thm}[Harbater]
\label{thm:Ha}
 Let $p$ be a prime number and let $\Pi$ be a profinite group. Consider the following four conditions (i)-(iv):
\begin{itemize}
\item[(i)] Every finite $p$-embedding problem for $\Pi$ is weakly solvable (i.e. $\cd_p(\Pi)\leq 1$).
\item[(ii)] Every finite $p$-embedding problem for $\Pi$ is weakly $\phi$-solvable, for every $p$-dominating
family of homomorphisms $\phi = \{\phi_j : \Pi_j \to \Pi\}_{j\in J}$.
\item[(iii)] Every finite $p$-embedding problem for $\Pi$ is properly solvable.
\item[(iv)] Every finite p-embedding problem for $\Pi$ is properly $\phi$-solvable, for every strongly
$p$-dominating family of homomorphisms $\phi = \{\phi_j : \Pi_j \to \Pi\}_{j\in J}$.
\end{itemize}
Then we have the implications (iii) $\Rightarrow$ (i) $\Leftrightarrow$ (ii) $\Rightarrow$ (iv). 
\end{thm}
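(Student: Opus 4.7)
The plan is to prove the four implications through cohomological and Frattini-type arguments, with the nontrivial content concentrated in (i) $\Rightarrow$ (ii) and (ii) $\Rightarrow$ (iv). The easy steps are (iii) $\Rightarrow$ (i), immediate from the fact that any proper solution is weak combined with the classical translation of weak solvability of every finite $p$-embedding problem into the bound $\cd_p(\Pi)\leq 1$ (obtained by filtering a finite $p$-group kernel $N$ by $\Pi$-invariant elementary abelian quotients and recognising the obstructions as classes in $H^2(\Pi,-)$), and (ii) $\Rightarrow$ (i), obtained by specialising to the empty family $J=\emptyset$.

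For (i) $\Rightarrow$ (ii), fix a $p$-dominating family $\phi=\{\phi_j\}_{j\in J}$, a finite $p$-embedding problem $\sE$ with kernel $N$, and a collection of local weak solutions $\beta_j$. I would argue by induction on $|N|$, reducing to the case where $N$ is elementary abelian by choosing a maximal $\Pi$-invariant proper subgroup $N'\subset N$ and first solving the quotient problem with kernel $N/N'$. Once $N=P$ is elementary abelian, a lift $\tilde\beta$ of the inductively produced weak $\phi$-solution of the quotient exists because the lifting obstruction lies in $H^2(\Pi,N')=0$. The pullback $\phi_j^*\tilde\beta$ and the given $\beta_j$ both solve $\phi_j^*(\sE)$ and agree modulo $N'$, so their difference is a tuple in $\prod_j H^1(\Pi_j,P)$. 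Surjectivity of $\phi^*$ produces a single class $\gamma\in H^1(\Pi,P)$ hitting this tuple, and twisting $\tilde\beta$ by $\gamma$ yields the desired weak $\phi$-solution of $\sE$.

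For (ii) $\Rightarrow$ (iv), the additional input is to upgrade a weak $\phi$-solution to a proper one without destroying the local matching, exploiting the \emph{infinite} kernel of $\phi^*$. Again by the Frattini filtration one reduces to the step where $N$ is elementary abelian. A weak $\phi$-solution $\beta:\Pi\to\Gamma$ fails to be proper precisely when $\beta(\Pi)$ is contained in one of the finitely many proper subgroups $H\subsetneq\Gamma$ satisfying $f(H)=G$. Once one $\beta$ is fixed, the set of weak $\phi$-solutions with the same prescribed local data forms a torsor under $\ker\phi^*\subset H^1(\Pi,N)$, which is infinite by the strongly $p$-dominating hypothesis. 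For each such $H$, the subset of twists whose image still lies in $H$ is an affine $\F_p$-subspace of positive codimension of this infinite $\F_p$-vector space, so a finite union of them cannot cover the torsor, and any twist outside the union provides a proper $\phi$-solution.

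The main obstacle will be the last step: one must verify that the locus of non-proper weak solutions really is a finite union of proper affine subspaces of $\ker\phi^*$. This requires bounding the number of maximal subgroups $H\subset\Gamma$ that can contain $\beta(\Pi)$ — these are parametrised by $\Pi$-invariant hyperplanes of $N$ and are therefore finite in number — and checking that each containment cuts out a proper affine subspace of $H^1(\Pi,N)$ under the cocycle twist. With these two ingredients in place, the infinite-kernel hypothesis is precisely what is needed to evade all the bad subspaces simultaneously while preserving the local data.
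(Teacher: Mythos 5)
First, a point of reference: the paper does not prove this statement at all --- it is quoted from Harbater (\cite[Theorem 2.3]{Ha1} together with the correction \cite[Theorem 1]{Ha2}) --- so your proposal has to be measured against Harbater's argument rather than against anything in the text. Your treatment of (iii)$\Rightarrow$(i), of (ii)$\Rightarrow$(i) via the empty family, and of (i)$\Rightarrow$(ii) (Frattini-type reduction to an elementary abelian kernel, vanishing of the $H^2$-obstruction, and correction of the local discrepancy cocycles using surjectivity of $\phi^*$) follows the standard route and is sound in substance, modulo the usual bookkeeping with the inner automorphisms when the local matching condition is passed through the inductive step.

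The genuine gap is in the last step of (ii)$\Rightarrow$(iv). It is \emph{false} that a finite union of proper (positive-codimension) affine subspaces cannot cover an infinite $\F_p$-vector space: the $p$ cosets of any hyperplane already cover it. So the assertion that each bad locus has positive codimension in $\ker\phi^*$ (which, moreover, you do not actually establish) is not enough to evade the union; one would need either infinite codimension or fewer than $p$ bad subspaces, and neither is available. The repair is to push the reduction one step further: a lift $\beta_c$ is surjective if and only if its image modulo $M$ is surjective for every maximal $\Pi$-submodule $M\subsetneq N$ (using $\beta_c(\Pi)N=\Gamma$ and a Dedekind-law computation), so one may assume the elementary abelian kernel $N$ is a \emph{simple} $\F_p[\Pi]$-module. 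Then every proper subgroup $H\leq\Gamma$ with $f(H)=G$ satisfies $H\cap N=1$, and two twists $\beta_c$, $\beta_{c'}$ with image in the same such $H$ must have $c=c'$, since their difference is a cocycle valued in $H\cap N=1$. Hence the bad locus is a \emph{finite} set of classes, and the infinitude of $\ker\phi^*$ --- the entire point of the ``strongly $p$-dominating'' hypothesis --- finishes the argument. You also leave implicit why surjectivity survives the Frattini reduction (namely that $\beta(\Pi)\Phi(N)=\Gamma$ forces $\beta(\Pi)=\Gamma$); this is true but needs to be said. The delicacy of exactly this implication is presumably why the cited correction \cite{Ha2} exists.
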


In this note, by a {\it $k$-group}, where $k$ is a field, we mean  an algebraic affine group scheme which is smooth (\cite{Wa}). This notion is equivalent to the notion of a linear algebraic group defined over $k$ in the sense of \cite{Bo}.

First we need the following lemma. Recall that a polynomial $f(T)\in k[T]$ in one variable $T$, with coefficients in a field $k$ of characteristic $p$, is called a {\it $p$-polynomial} if $f(T)$ is of the form $f(T)=\sum_{i=0}^m b_i T^{p^i}$, $b_i\in k$.

\begin{lem}
\label{lem:additivepol}
Let $k$ be an infinite field of characteristic $p>0$. Let $P$ be a nontrivial finite commutative $k$-group which is  annihilated by $p$. Then $P$ is $k$-isomorphic to a $k$-subgroup of the additive group $\G_a$, of the form $\{x \mid f(x)=0\},$
where $f(T)=T+b_1T^p+\cdots+b_mT^{p^m}$ is a $p$-polynomial with coefficients in $k$, $m\geq 1$ and $b_m\not=0$. 
\end{lem}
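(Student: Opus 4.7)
The plan is to reduce the statement to a Galois-module theoretic embedding problem and then use the normal basis theorem together with self-injectivity of the group algebra $\F_p[G]$.

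First I would note that since $P$ is finite and smooth over $k$, it is étale, so it corresponds to a finite $\F_p[\Gal(\bar k/k)]$-module $V = P(\bar k)$. Let $K/k$ be a finite Galois extension, with group $G = \Gal(K/k)$, through which the Galois action on $V$ factors; then $V$ is a finite $\F_p[G]$-module and $V \subseteq K$ suffices to give a $k$-group embedding of $P$ into the additive group scheme $\G_{a,k}$. So the heart of the lemma is producing a $G$-equivariant, $\F_p$-linear injection $V \hookrightarrow K$, where $K$ is regarded as an additive $\F_p[G]$-module.

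The next step is to compute the $\F_p[G]$-module structure of $K$. By the normal basis theorem, $K$ is free of rank one as a $k[G]$-module, so as an $\F_p[G]$-module $K \cong k \otimes_{\F_p} \F_p[G]$; since $k$ is infinite this is a free $\F_p[G]$-module of (countably or uncountably) infinite rank. In particular $K$ contains, as an $\F_p[G]$-submodule, a copy of $\F_p[G]^n$ for every $n \geq 1$. It therefore suffices to embed $V$ into $\F_p[G]^n$ for some $n$. For this I would invoke the fact that $\F_p[G]$ is a Frobenius (in particular self-injective) algebra: concretely, pick a surjection $\F_p[G]^n \twoheadrightarrow V^*$ of the $\F_p$-linear contragredient dual, and dualize, using the Frobenius isomorphism $\F_p[G]^* \cong \F_p[G]$ as $\F_p[G]$-modules, to get $V \hookrightarrow \F_p[G]^n$.

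Composing these embeddings gives $V \hookrightarrow K$ as required, hence a closed immersion of $k$-group schemes $P \hookrightarrow \G_{a,k}$. Its image $I$ is a finite étale $k$-subgroup scheme of $\G_a$, and the polynomial $f(T) = \prod_{a \in I(\bar k)}(T-a) \in k[T]$ cuts it out. Being a product over an additive subgroup it is additive, hence a $p$-polynomial; being a product of distinct linear factors it is separable, so $f'(0) \neq 0$, i.e.\ the coefficient of $T$ is nonzero. After dividing through by this coefficient, $f(T) = T + b_1 T^p + \cdots + b_m T^{p^m}$. Since $P$ is nontrivial, $\deg f \geq 2$, forcing $m \geq 1$ and $b_m \neq 0$ by choice of $m$.

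I expect the main obstacle to be the module-theoretic Step producing $V \hookrightarrow \F_p[G]^n$; the key input is the self-injectivity of $\F_p[G]$ (equivalently, the Frobenius structure), since $\F_p[G]$ is not semisimple when $p \mid |G|$, and naive decomposition arguments fail. The use of the hypothesis that $k$ is infinite is also essential, since only then does $K$ acquire enough $\F_p[G]$-rank to accommodate an arbitrary finite $\F_p[G]$-module.
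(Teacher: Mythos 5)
Your argument is correct, but it is genuinely different from what the paper does: the paper gives no proof at all and simply cites the literature (\cite[Proposition B.1.13]{CGP} and Oesterl\'e), where the statement is established in the broader context of smooth commutative $p$-torsion (unipotent) groups. Your route is a self-contained proof tailored to the finite \'etale case: identify $P$ with a finite $\F_p[G]$-module $V=P(\bar k)$ for $G=\Gal(K/k)$, use the normal basis theorem to see $K\cong k\otimes_{\F_p}\F_p[G]$ as an $\F_p[G]$-module (free of infinite rank precisely because $k$ is infinite --- and you are right that this is where the hypothesis enters; for $k=\F_p$ and $P=(\Z/p)^2$ trivial the lemma fails), embed $V$ into a finite free $\F_p[G]$-module by self-injectivity of the group algebra, and then observe that the resulting finite \'etale subgroup of $\G_a$ is cut out by a separable $p$-polynomial. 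Two small points deserve a word more than you give them. First, the claim that $f(T)=\prod_{a\in I(\bar k)}(T-a)$ is additive because its root set is an $\F_p$-subspace is Ore's classical theorem on additive polynomials; it is true but not formally immediate (the usual proof: $g(X,Y)=f(X+Y)-f(X)-f(Y)$ has $Y$-degree $<|I(\bar k)|$ yet vanishes at every $Y=b\in I(\bar k)$, hence is identically zero), so cite or prove it. Second, the Frobenius-algebra step can be replaced by the completely elementary embedding $v\mapsto\sum_{g\in G}g\otimes g^{-1}v$ of $V$ into $\F_p[G]\otimes_{\F_p}V$ with $G$ acting on the left factor only, which is free of rank $\dim_{\F_p}V$; this avoids invoking self-injectivity altogether. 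What your approach buys is an explicit, elementary proof in place of a black-box reference; what the cited references buy is the statement for non-finite smooth $p$-torsion groups, which the paper does not actually need.
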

\begin{proof} This is well known, see e.g.\ \cite[Proposition B.1.13]{CGP} or \ \cite[Chapter V, Proposition 4.1 and Subsection 6.1]{Oe}.
\end{proof}

Let $k$ be a field of characteristic $p>0$, and $v$ a valuation of $k$. Let $k_v$ be the completion of $k$ at $v$. We will fix an embedding $\phi_v:\Gal(k_v)\to \Gal(k)$.

\begin{prop}
\label{prop:surj}
Let $k$ be a field of characteristic $p>0$. Let $\Omega$ be the set of non-equivalent valuations of rank 1 of $k$. Let $S$ be a finite subset of $\Omega$. Let $\phi_S$ be the family of homomorphisms $\{\phi_v:\Gal(k_v)\to  \Gal(k)\}_{v\in S}$. 
\begin{enumerate}
 \item The family $\phi_S$ is $p$-dominating.
 \item If there is a valuation $w\in \Omega\setminus S$ whose value group is non-$p$-divisible then the family $\phi_S$ is strongly $p$-dominating.
\end{enumerate}
\end{prop}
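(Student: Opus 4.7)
The plan is to compute both sides of $\phi_S^*$ explicitly by realising $P$ as the kernel of a $p$-polynomial via Lemma~\ref{lem:additivepol}, and then to use weak approximation at the finite set of valuations $S$ (resp.~$S\cup\{w\}$).

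First I would apply Lemma~\ref{lem:additivepol} to write $P$ as the kernel of a separable $p$-polynomial $f(T)=T+b_1T^p+\cdots+b_mT^{p^m}\in k[T]$ with $b_m\neq 0$. The short exact sequence of \'etale $k$-group schemes
\[
0\to P\to\G_a\xrightarrow{f}\G_a\to 0,
\]
combined with additive Hilbert~90 ($H^1(\Gal(k),\G_a)=0$), yields canonical isomorphisms $H^1(\Gal(k),P)\cong k/f(k)$ and $H^1(\Gal(k_v),P)\cong k_v/f(k_v)$, so $\phi_S^*$ becomes the natural map $k/f(k)\to\prod_{v\in S}k_v/f(k_v)$.

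For (1), I would first observe that since $f'(0)=1$ is a unit, Hensel's lemma in the complete valued field $k_v$ forces $f(k_v)$ to be an open subgroup of $k_v$ (it contains all elements of sufficiently positive $v$-valuation). Given a tuple $(a_v)_{v\in S}$, weak approximation for the pairwise non-equivalent rank-$1$ valuations in $S$ produces $a\in k$ with $v(a-a_v)$ as large as desired at each $v\in S$, hence $a\equiv a_v\pmod{f(k_v)}$. This is the surjectivity, i.e.~the $p$-dominating property.

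For (2), I would exhibit infinitely many classes in $\ker\phi_S^*$ using the valuation $w$. The key estimate is: for $x\in k_w$ with $w(x)$ sufficiently negative, the term $b_mx^{p^m}$ strictly dominates in $f(x)$, so $w(f(x))=w(b_m)+p^mw(x)$; consequently, if $a\in f(k_w)$ has $w(a)$ sufficiently negative, then $w(a)\in w(b_m)+p^mw(k^*)$. Non-$p$-divisibility of $w(k^*)$ gives $p^mw(k^*)\subsetneq w(k^*)$, so I can fix a class $\gamma_0+p^mw(k^*)\neq w(b_m)+p^mw(k^*)$ and pick a strictly decreasing sequence $\gamma_j=\gamma_0+p^m\delta_j$ with $\delta_j\to-\infty$ fast enough that all $\gamma_j$ lie below the required threshold. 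Choosing $\alpha_j\in k^*$ with $w(\alpha_j)=\gamma_j$, weak approximation on $S\cup\{w\}$ produces $a_j\in k$ arbitrarily close to $0$ at each $v\in S$ (so $a_j\in f(k_v)$, i.e.~$a_j\in\ker\phi_S^*$) and so close to $\alpha_j$ at $w$ that $w(a_j)=\gamma_j$. For $i<j$ this gives $w(a_j-a_i)=\gamma_j\notin w(b_m)+p^mw(k^*)$, still beneath the threshold, so $a_j-a_i\notin f(k_w)\supset f(k)$; hence the $a_j$'s yield infinitely many distinct classes in the kernel.

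The main obstacle, I expect, is the leading-term estimate $w(f(x))=w(b_m)+p^mw(x)$ and, more importantly, extracting a uniform threshold on $w(x)$ below which it (and its consequence for $a\in f(k_w)$) holds: this requires some bookkeeping with the valuations of the coefficients $b_i$, but is ultimately a direct application of the non-archimedean triangle inequality term by term.
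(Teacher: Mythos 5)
Your proposal is correct, and the reduction is identical to the paper's: both realise $P$ as the kernel of a separable $p$-polynomial $f$ via Lemma~\ref{lem:additivepol}, use additive Hilbert~90 to identify $\phi_S^*$ with $k/f(k)\to\prod_{v\in S}k_v/f(k_v)$, and prove surjectivity in part~(1) by combining openness of $f(k_v)$ (you via Hensel's lemma, the paper via the implicit function theorem for the separable morphism $f$ --- the same fact) with weak approximation at the finitely many inequivalent rank-$1$ valuations. The difference is in part~(2). The paper enlarges $S$ to $S'=S\cup\{w\}$, notes that $\phi^*_{S'}$ is surjective by the part-(1) argument, and then simply \emph{cites} \cite[Proof of Theorem 1.1]{BT} for the fact that $H^1(\Gal(k_w),P)=k_w/f(k_w)$ is infinite, so that the preimage of $0\times\cdots\times 0\times H^1(\Gal(k_w),P)$ already gives an infinite kernel. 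You instead make this self-contained: the leading-term estimate $w(f(x))=w(b_m)+p^m w(x)$ for $w(x)$ below an explicit threshold shows that elements of $f(k_w)$ of very negative valuation have valuation in $w(b_m)+p^m w(k^*)$, and non-$p$-divisibility of the value group then lets you manufacture, by approximation on $S\cup\{w\}$, a sequence $a_j\in k$ lying in every $f(k_v)$, $v\in S$, whose pairwise differences avoid $f(k_w)\supseteq f(k)$. This is essentially the content of the cited result of \cite{BT} reproved in place; your version costs some bookkeeping with the coefficients $b_i$ (which you correctly identify and which does go through, using also that the value group of $k_w$ equals that of $k$ for a rank-$1$ valuation), but buys independence from the external reference. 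Both arguments are structurally the same use of the extra place $w$; there is no gap.
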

\begin{proof}
 Let $P$ be a non-trivial elementary $p$-group on which $\Gal(k)$ acts (continuously). For the part (1), we need to show that
\[
 \phi_S^*: H^1(\Gal(k),P)\to \prod_{v\in S} H^1(\Gal(k_v),P)
\]
is surjective and for the part (2), we need to show further that $\phi_S^*$ has an infinite kernel.

Consider $P$ as a finite $k$-group. Then $P$ is commutative and annihilated by $p$. Hence by Lemma~\ref{lem:additivepol}, $P$ is $k$-isomorphic to a subgroup of $\G_a$ defined as the kernel of $f: \G_a\to \G_a$,  where $f(T)=T+\cdots+b_mT^{p^m}$ is a $p$-polynomial in one variable with coefficients in $k$ with $m\geq 1$ and  $b_m\not=0$. We have the following exact sequence of $k$-groups
$$
0\to P\to \G_a \stackrel{f}{\to}\G_a\to 0.  
$$
From this exact sequence we get the following exact sequence of Galois cohomology groups
$$
H^0(L,\G_a)\stackrel{f}{\to} H^0(L,\G_a)\to H^1(L,P)\to H^1(L,\G_a),
$$
for any field extension $L\supset k$.

By Hilbert 90 $H^1 (L,\G_a)=0$ (see e.g.\ \cite[Chapter II,  Proposition 1]{Se}), hence
$$
H^1(\Gal(L),P)=H^1(L,P)\simeq H^0(L,\G_a)/\im(f)=L/f(L),
$$ 
for any field extension $L\supset k$. 

In particular, $H^1(\Gal(k),P)=k/f(k)$, $H^1(\Gal(k_v),P)=k_v/f(k_v)$ and the map $\phi_S^*$ becomes the canonical map
\[
 \varphi_S:k/f(k) \to \prod_{v\in S} k_v/f(k_v).
\]
(1) 
By the argument as in \cite[Proof of Lemma 2]{TT1}, $\varphi_S$ is surjective. Since the argument is short,  we will present it here for the convenience of the reader.
 
Since $f$ is a separable morphism, we may apply the implicit function theorem and deduce that the subgroup $f(k_v)\subset k_v$ is open. 

Considering $k$ as embedded into $\prod_{v\in S} k_v$, then $k$ is dense in the product $\prod_{v\in S} k_v$ (the weak approximation theorem) and  $\prod_{v\in S} f(k_v)$  is open there. Therefore, $k+\prod_{v\in S} f(k_v)=\prod_{v\in S} k_v$, and $\phi^*_S$ is surjective. 
\\
\\
{\noindent (2)} Set $S^\prime=S\cup\{w\}$. Then, as above, the map 
\[ 
\phi^*_{S^\prime}: H^1(\Gal(k),P) \to \prod_{v\in S} H^1(\Gal(k_v),P) \times H^1(\Gal(k_w),P)
\]
 is surjective. 

Furthermore, $H^1(\Gal(k_w),P)$ is infinite (see \cite[Proof of Theorem 1.1]{BT}). 
 Thus the kernel of $\phi_S^{*}$ is infinite.
\end{proof}
 We have the following main theorem of the paper.
\begin{thm} 
\label{thm:solvable}
Let $k$ be a field of characteristic $p>0$, which has infinitely many discrete valuations. Let $S$ be a finite set of non-equivalent discrete valuations of $k$ and let $\phi_S$ be the family of homomorphisms $\{\phi_v:\Gal(k_v)\to  \Gal(k)\}_{v\in S}$. Then every finite $p$-embedding problem  for $\Gal(k)$ is properly $\phi_S$-solvable.
\end{thm}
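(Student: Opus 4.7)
The plan is to obtain the theorem as a direct application of Harbater's Theorem \ref{thm:Ha} to $\Pi = \Gal(k)$ and $\phi = \phi_S$. Two ingredients are required: condition (i) of Theorem \ref{thm:Ha} for $\Gal(k)$, i.e.\ $\cd_p(\Gal(k)) \leq 1$, and the strong $p$-dominance of the family $\phi_S$. Once these are in hand, the chain of implications (i) $\Leftrightarrow$ (ii) $\Rightarrow$ (iv) of Theorem \ref{thm:Ha} delivers precisely the statement that every finite $p$-embedding problem for $\Gal(k)$ is properly $\phi_S$-solvable.

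For the first ingredient, $\cd_p(\Gal(k)) \leq 1$ is classical for any field $k$ of characteristic $p > 0$; it follows from the Artin--Schreier exact sequence $0 \to \F_p \to \G_a \to \G_a \to 0$ together with Hilbert 90 ($H^1(L, \G_a) = 0$ for any extension $L/k$), exactly as in Serre's \emph{Cohomologie Galoisienne}, Chapter II. Equivalently, this is condition (i) for $\Pi = \Gal(k)$, which in turn is the weak solvability of every finite $p$-embedding problem for $\Gal(k)$.

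For the second ingredient I would invoke Proposition \ref{prop:surj}(2). Since $k$ admits infinitely many non-equivalent discrete valuations by hypothesis while $S$ is finite, I may pick a discrete valuation $w \in \Omega \setminus S$. Its value group is $\Z$, which is manifestly not $p$-divisible, so the hypothesis of Proposition \ref{prop:surj}(2) is satisfied and $\phi_S$ is strongly $p$-dominating.

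Combining these two facts with Theorem \ref{thm:Ha} completes the proof. The genuine content has already been carried out in Proposition \ref{prop:surj} (which itself relies on Lemma \ref{lem:additivepol} to reduce an arbitrary elementary abelian $p$-group to an Artin--Schreier-type subgroup of $\G_a$, and then on weak approximation along the valuations in $S$); the step presented here is essentially a bookkeeping reduction to Harbater's framework, so I do not anticipate any serious technical obstacle. The only point that warrants a small explicit check is that the existence of a single valuation $w$ outside $S$ with non-$p$-divisible value group really does give strong $p$-dominance of the smaller family $\phi_S$, but this is already built into the statement of Proposition \ref{prop:surj}(2).
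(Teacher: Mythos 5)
Your proposal is correct and follows exactly the paper's own argument: $\cd_p(\Gal(k))\leq 1$ from Serre, strong $p$-dominance of $\phi_S$ via Proposition \ref{prop:surj}(2) using a discrete valuation $w\notin S$ (available since $S$ is finite and $k$ has infinitely many discrete valuations, with value group $\Z$ hence non-$p$-divisible), and then Theorem \ref{thm:Ha}. No discrepancies to report.
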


\begin{proof}
Since $\Char (k)=p $, we have $\cd_p(\Gal(k))\leq 1$ (see \cite[Chapter II, Proposition 3]{Se}). By Proposition \ref{prop:surj} part (2), the family $\phi_S$ is strongly $p$-dominating. Therefore, by Theorem \ref{thm:Ha}, every finite $p$-embedding problem for $\Gal(k)$ is properly $\phi_S$-solvable.
\end{proof}

\section{Admissibility of finite groups over global function fields}
In this section we will use Theorem \ref{thm:solvable} to study the admissibility of finite groups over global function fields (Theorem \ref{thm:adm})

To prove Theorem \ref{thm:adm}, we need the following criterion of admissibility of finite groups over global fields, which is due to Schacher (see \cite[Propositions 1, 5 and 6]{Sch}).

\begin{thm}[Schacher]
\label{thm:Sch}
Let $k$ be a global field, $G$ a finite group. Then $G$ is $k$-admissible if and only if there exists a finite Galois extension $L/k$ with Galois group isomorphic to $G$ such that for every prime $l$ dividing the order of $G$, $\Gal(L_v/l_v)$ contains an $l$-Sylow subgroup of $G$ for at least two different primes $v$ of $k$, where $k_v$, $L_v$ are the completions of $k$ and $L$ at $v$, respectively. 
\end{thm}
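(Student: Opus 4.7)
The plan is to reformulate $k$-admissibility entirely in terms of local Hasse invariants and then extract the two-places-per-prime condition from the Brauer--Hasse--Noether sum relation. Since a maximal commutative subfield of a finite-dimensional central division $k$-algebra $D$ has degree equal to $\mathrm{ind}(D)$, the condition that $L$ be such a subfield with $\Gal(L/k)\cong G$ is equivalent to the existence of a central simple $k$-algebra $D$ with $\mathrm{ind}(D)=[L:k]=|G|$ that is split by $L$. Two standard local--global ingredients underlie everything: (a) by Brauer--Hasse--Noether, a central simple $k$-algebra is determined up to Brauer equivalence by its local invariants $\mathrm{inv}_v(D)\in\Q/\Z$, these invariants are zero at almost all $v$, and $\sum_v\mathrm{inv}_v(D)=0$; (b) $\mathrm{ind}(D_v)$ equals the order of $\mathrm{inv}_v(D)$ in $\Q/\Z$, $\mathrm{ind}(D)=\mathrm{lcm}_v\mathrm{ind}(D_v)$, and $L$ splits $D$ if and only if, for every $v$ and one (equivalently every) $w\mid v$, one has $\mathrm{ind}(D_v)\mid [L_w:k_v]=|G_w|$, where $G_w=\Gal(L_w/k_v)$ is the decomposition group.

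For the forward direction, suppose $G$ is $k$-admissible and fix $l\mid|G|$ with $|G|_l=l^e$. Since $\mathrm{ind}(D)=|G|$, some place $v_0$ has $\mathrm{inv}_{v_0}(D)$ of $l$-adic denominator exactly $l^e$; write its $l$-primary component as $a/l^e$ with $\gcd(a,l)=1$. If no other place had $l$-primary invariant with denominator $l^e$, then the $l$-primary part of the global sum, expressed over the common denominator $l^e$, would have numerator $a+l\cdot(\text{integer})$, which is not divisible by $l$, contradicting $\sum_v\mathrm{inv}_v(D)=0$. Hence there is a second place $v_1\neq v_0$ with $\mathrm{ind}(D_{v_1})$ having $l$-part $l^e$. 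At both $v_0$ and $v_1$, the splitting condition $\mathrm{ind}(D_v)\mid |G_w|$ forces $|G|_l\mid|G_w|$, so $\Gal(L_w/k_v)$ contains an $l$-Sylow of $G$.

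For the converse, assume the Sylow condition. For each $l\mid|G|$ pick two distinct places $v_1^{(l)},v_2^{(l)}$ of $k$ at which the decomposition group contains an $l$-Sylow, choosing the chosen places disjoint as $l$ varies (possible since $k$ has infinitely many places). Prescribe
\[
\mathrm{inv}_{v_1^{(l)}}(D):=\tfrac{1}{|G|_l},\qquad \mathrm{inv}_{v_2^{(l)}}(D):=-\tfrac{1}{|G|_l},
\]
and $\mathrm{inv}_v(D):=0$ at every other place. The total sum vanishes in $\Q/\Z$, so Brauer--Hasse--Noether produces a central simple $k$-algebra $D$ with precisely these invariants. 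Then $\mathrm{ind}(D_{v_i^{(l)}})=|G|_l$, and since these values have coprime orders across the primes $l$, $\mathrm{ind}(D)=\prod_l|G|_l=|G|$. At each chosen place, $\mathrm{ind}(D_v)=|G|_l$ divides $|G_w|$ by hypothesis, while at all other places $\mathrm{ind}(D_v)=1$; therefore $L$ splits $D$ and the underlying division algebra has $L$ as a maximal commutative subfield.

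The main obstacle is the $l$-primary bookkeeping in the forward direction: one must use the sum-zero relation in $\Q/\Z$, broken up prime by prime, to force a second place carrying the maximal $l$-adic denominator. This is precisely what the "at least two different primes $v$" clause in Schacher's criterion encodes; the converse, by contrast, is a near-immediate application of Brauer--Hasse--Noether once the correct local invariants have been selected.
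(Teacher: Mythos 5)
The paper does not prove this statement at all: it is quoted as Schacher's criterion and justified only by the citation to \cite{Sch} (Propositions 1, 5 and 6), so there is no internal argument to compare against. Your proposal reconstructs what is essentially Schacher's original proof: translate admissibility into the existence of a Brauer class of index $|G|$ split by $L$, use the Albert--Brauer--Hasse--Noether description of ${\rm Br}(k)$ by local invariants, and read off the two-places condition from the vanishing of the $l$-primary part of $\sum_v {\rm inv}_v(D)$. The forward direction is correct: if only one place carried the maximal $l$-adic denominator $l^e$, the $l$-primary sum would have numerator prime to $l$ over denominator $l^e$ and could not vanish, and at any place with $l^e\mid {\rm ind}(D_v)$ the splitting condition ${\rm ind}(D_v)\mid [L_w:k_v]$ forces the decomposition group to contain an $l$-Sylow of $G$. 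The converse construction of invariants $\pm 1/|G|_l$ is also the standard one, and the archimedean constraints (invariants in $\tfrac12\Z/\Z$ at real places) are automatically respected because a real decomposition group has order at most $2$.

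One step is incorrectly justified, though easily repaired. In the converse you choose the pairs $v_1^{(l)},v_2^{(l)}$ ``disjoint as $l$ varies (possible since $k$ has infinitely many places).'' That reasoning fails: the places available for a given $l$ are only those where the decomposition group contains an $l$-Sylow, and this set can be finite (for instance, if the $l$-Sylow is non-cyclic, only the finitely many ramified places qualify), so you cannot invoke the infinitude of places of $k$ to separate the pairs. The fix is to drop disjointness altogether: if a place $v$ is chosen for several primes $l$, assign it the sum of the corresponding invariants $\pm 1/|G|_l$; since the $|G|_l$ are pairwise coprime, the local index at $v$ is the product of the relevant $|G|_l$, each of which divides $|G_w|$, hence so does their product, and the rest of your argument goes through unchanged. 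With that correction the proof is complete and matches the argument in Schacher's paper.
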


We are now ready to prove Theorem \ref{thm:adm}.

\begin{proof}[Proof of Theorem \ref{thm:adm}]
 Set $G=\Gamma/P$. By Theorem \ref{thm:Sch}, there exists a finite Galois extension $L/k$ with Galois group $\Gal(L/k)\simeq G$ such that for every prime $l$ dividing the order of $G$, $\Gal(L_v/k_v)$ contains an  $l$-Sylow subgroup of $G$ for at least two different primes $v$ of $k$, where $k_v$, $L_v$ are the completion of $k$ and $L$ at $v$, respectively. We have the following diagrams
\[ \sE:=
\xymatrix
{
{} & {} & {} & \Gal(k) \ar[d]^{\alpha}\\
1 \ar[r] & P \ar[r] &\Gamma \ar[r]^-f & {G=\Gal(L/k)}\ar[r] & 1
}
\]
and 

\[ \sE_v:=
\xymatrix
{
{} & {} & {} & \Gal(k_v) \ar[d]^{\alpha_v}\\
1 \ar[r] & P \ar[r] &\Gamma_v \ar[r]^-f & G_v=\Gal(L_v/k_v)\ar[r] & 1
}
\]
for each discrete valuation $v$ of $k$, where $\Gamma_v=f^{-1}(G_v)$.

Let $l$ be any prime number which divides the order of $\Gamma$. There are two discrete valuations $v_1(l), v_2(l)$ of $k$ such that for each $v$ in $\{v_1(l),v_2(l)\}$, $\Gal(L_v/k_v)$ contains a $l$-Sylow group of $G$. Set $S$ be the union of all $\{v_1(l),v_2(l)\}$, where $l$ runs over the finite set of prime divisors of the order of $\Gamma$.  For each $v$ in $S$, the embedding problem $\sE_v$ has a weak solution $\beta_v:\Gal(k_v)\to \Gamma_v$ since $\cd_p(\Gal(k_v))\leq 1$. (In fact, $\sE_v$ has a proper solution, see \cite[Theorem 1.1]{BT}).

  By Theorem \ref{thm:solvable}, there exists a proper solution $\beta:\Gal(k)\to \Gamma$ of $\sE$ such that its induced local solution $\beta|_{\Gal(k_v)}:\Gal(k_v)\to\Gamma_v$ is equal to $\beta_v$ up to an inner automorphism by an element in $P$, for each $v$ in $S$. 

Let $K/k$ be the Galois extention corresponding to the solution $\beta$. Then $\Gal(K/k)=\Gamma$ and $\Gal(K_v/k_v)=\Gamma_v$. Furthermore, for each $v$ in $S$, $\Gamma_v$ contains an $l$-Sylow subgroup of $\Gamma$ since $G_v$ contains an $l$-Sylow subgroup $G$. By Theorem \ref{thm:Sch}, $\Gamma$ is $k$-admissible.
\end{proof}

Inspired by recent works of \cite{HHK, NP} on the admissibility of finite groups, we would like to raise the following question.

\begin{question}
 Does Theorem \ref{thm:adm} still hold true for other fields of characteristic $p>0$ besides global function fields, e.g., 
\begin{enumerate}
 \item A field which is finitely generated field extension of $K$ of transcendence degree one, where $K$ is complete with respect to a discrete valuation and whose residue field is algebraically closed,
\item The fraction field of a complete local domain of dimension 2, with a separably closed residue field?
\end{enumerate}
\end{question}

\section{A result of Sonn}

Let $k$ be a field, $G$ a finite group. In \cite{FS}, the authors say that the pair $(k,A)$ has Property A if every division algebra with center $k$ and index equal to the order of $G$ is a crossed product for $G$. They show that if $k$ if $k$ is a global field of characteristic not dividing the order of $G$ then $(k,G)$ has Property A if and only if $G$ is the direct product of two cyclic groups $C_e$ and $C_f$ of order $e, f$, respectively, and $k$ contains the $e$-th roots of unity; if $k$ is a global field of finite characteristic $p$ dividing the order of $G$, then a necessary condition that $(k,G)$ has Property A is that $G$ have a normal $p$-Sylow subgroup with quotient group $C_e\times C_f$, $e$ and $f$ prime to $p$, and $k$ contains the $e$-th roots of unity. In \cite{So1}, Sonn show that the above condition is also sufficient by proving the following theorem.

\begin{thm}[{\cite[Theorem 3]{So1}}]
\label{thm:Sonn}
Let $k$ be a global field of characteristic $p>0$, $e$ and $f$ positive integers prime to $p$, and assume that $k$ contains the $e$-th roots of unity. Let $\Gamma$ be a finite group whose $p$-Sylow group $P$ is normal, with factor group $G$ isomorphic to the direct product of two cyclic groups of order $e$ and $f$, respectively. Let $S$ be any finite set of primes of $k$. Then there exists a Galois extension $K$ of $k$ with $\Gal(K/k)\simeq \Gamma$ such that for each $v\in S$, $\Gal(K_v/k_v)\simeq \Gamma$, where $L_v$, $k_v$ denote the completion of $L$, $k$, respectively, at $v$.
\end{thm}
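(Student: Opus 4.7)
The plan is to closely follow the proof of Theorem~\ref{thm:adm}. Namely, I first realize the quotient $G = \Gamma/P \simeq C_e \times C_f$ as the Galois group of an extension $L/k$ such that $\Gal(L_v/k_v) \simeq G$ also for every $v \in S$. Once such an $L$ is constructed, the $p$-embedding problem $\sE : 1 \to P \to \Gamma \to G = \Gal(L/k) \to 1$ for $\Gal(k)$ has the property that its pullback $\sE_v$ to $\Gal(k_v)$ at each $v \in S$ again has $\Gamma_v = f^{-1}(\Gal(L_v/k_v)) = \Gamma$, and Theorem~\ref{thm:solvable} then reduces the proof to producing, for every $v \in S$, a proper local solution of $\sE_v$. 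Such a proper local solution exists by \cite[Theorem~1.1]{BT} (as already invoked in the proof of Theorem~\ref{thm:adm}). The global proper solution $\beta : \Gal(k) \twoheadrightarrow \Gamma$ produced by Theorem~\ref{thm:solvable} satisfies $\beta \circ \phi_v = {\rm inn}(n_v) \circ \beta_v$ for some elements $n_v \in P$; since each $\beta_v$ is surjective and inner automorphisms are bijections, the compositions $\beta \circ \phi_v$ are also surjective, so the fixed field $K$ of $\ker \beta$ gives $\Gal(K/k) \simeq \Gamma$ and $\Gal(K_v/k_v) \simeq \Gamma$ for every $v \in S$.

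For the construction of $L$, I would handle the two cyclic factors separately. The $C_e$-factor is produced by Kummer theory combined with weak approximation: using the hypothesis $\mu_e \subset k$, choose $a \in k^*$ with $v(a) = 1$ at every $v \in S$; then $L_1 := k(\sqrt[e]{a})$ is a cyclic $C_e$-extension of $k$ which is totally tamely ramified of degree $e$ at every $v \in S$. The $C_f$-factor is more delicate because $\mu_f$ need not lie in $k$. At every $v \in S$, the local field $k_v$ admits a $C_f$-extension (for instance the unique unramified extension of degree $f$), and a Grunwald--Wang type argument in positive characteristic, which has no ``special case'' since $\gcd(f, \Char k) = 1$, glues these local $C_f$-extensions into a global $C_f$-extension $L_2/k$ with $\Gal((L_2)_v/k_v) \simeq C_f$ at every $v \in S$. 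A standard linear disjointness argument (e.g., arranging $L_2$ to split completely at some auxiliary prime not in $S$ where $L_1/k$ is inert) then gives $L_1 \cap L_2 = k$, so the compositum $L := L_1 L_2$ is Galois over $k$ with group $C_e \times C_f \simeq G$ and with $\Gal(L_v/k_v) \simeq G$ for all $v \in S$.

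I expect the principal obstacle to be the construction of the $C_f$-factor of $L$, since Kummer theory is not directly available over $k$; one must appeal to Grunwald--Wang for global function fields, or equivalently descend a suitable Kummer extension from $k(\mu_f)$ to $k$ by a norm and Hilbert~90 argument, taking care to match the prescribed local completions at the finite set $S$. Once $L$ is in hand, the rest of the proof parallels that of Theorem~\ref{thm:adm} verbatim, with the role of Schacher's admissibility criterion replaced by the direct verification that the prescribed local Galois groups equal $\Gamma$.
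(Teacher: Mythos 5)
Your proposal is correct and follows essentially the same route as the paper: construct $L/k$ with group $G$ whose completions at each $v\in S$ have full local Galois group $G$, take proper local solutions of the induced embedding problems from \cite[Theorem 1.1]{BT}, and conclude via Theorem \ref{thm:solvable}. The only difference is that the paper builds $L$ in one stroke by prescribing the local extensions $L_v=(\text{unramified of degree }f)\cdot k_v(\sqrt[e]{\pi_v})$ and applying Grunwald's theorem \cite[Chapter 10, Theorem 5]{AT} to the abelian group $G$ directly, which sidesteps your separate global Kummer construction for the $C_e$-factor and the linear-disjointness argument needed to form the compositum $L_1L_2$.
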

Using Theorem \ref{thm:solvable}, we can also give another proof of the above result of Sonn.
\begin{proof}
 For each $v\in S$, let $L_v$ be the (tamely ramified) extension of $k_v$ generated by the unramified extension of $k_v$ of degree $f$ and by the $e$-th root of a prime element of $k_v$. Since $k_v$ contains the $e$th roots of unity, this is an abelian extension with Galois group isomorphic to $G$. By Gruenwald's theorem (\cite[Chapter 10, Theorem 5]{AT}), there is a Galois extension $L/k$ with $\Gal(L/k)\simeq G$ whose completions coincide with $L_v$. That means the following embedding problem
\[ \sE:=
\xymatrix
{
{} & {} & {} & \Gal(k) \ar[d]^{\alpha}\\
1 \ar[r] & P \ar[r] &\Gamma \ar[r]^-f & {G=\Gal(L/k)}\ar[r] & 1
}
\]
induces the following local one
\[ \sE_v:=
\xymatrix
{
{} & {} & {} & \Gal(k_v) \ar[d]^{\alpha_v}\\
1 \ar[r] & P \ar[r] &\Gamma \ar[r]^-f & G=\Gal(L_v/k_v)\ar[r] & 1
}
\]
For each $v$ in $S$, the embedding problem $\sE_v$ has a proper solution $\beta_v:\Gal(k_v)\to \Gamma$ (see \cite[Theorem 1.1]{BT}).   By Theorem \ref{thm:solvable}, there exists a proper solution $\beta:\Gal(k)\to \Gamma$ of $\sE$ such that its induced local solution $\beta|_{\Gal(k_v)}:\Gal(k_v)\to\Gamma$ is equal to $\beta_v$ up to an inner automorphism by an element in $P$, for each $v$ in $S$. 

Let $K/k$ be the Galois extension corresponding to the solution $\beta$. Then we have $\Gal(K/k)=\Gamma$ and $\Gal(K_v/k_v)=\Gamma$ as required. 
\end{proof}

\end{document}